\newtheorem{theorem}{Theorem}
\newtheorem{proposition}[theorem]{Proposition}
\newtheorem{corollary}[theorem]{Corollary}
\newtheorem{definition}[theorem]{Definition}
\def\r{\mathbb R}
\def\s{\mathbb S}
\date{}
\begin{document}
\title{Capillary surfaces in a cone }
\author{Rafael L\'opez}
 \address{Departamento de Geometr\'{\i}a y Topolog\'{\i}a\\
Universidad de Granada\\
18071 Granada, Spain\\}
 \email{ rcamino@ugr.es}
 \author{Juncheol Pyo}
\address{Department of Mathematics\\ Pusan National University\\ Busan 609-735, Korea}
\email{jcpyo@pusan.ac.kr}

\begin{abstract}We show that a   capillary surface in  a solid cone, that is, a surface that has constant mean curvature and the boundary of surface meets the boundary of the cone with a constant angle, is radially graphical if the mean curvature is non-positive with respect to the Gauss map pointing toward the domain bounded by the surface and the boundary of the cone. In the particular case that  the cone is circular, we prove that the surface is a spherical cap or a planar disc. The proofs are based on an extension of the Alexandrov reflection method by using inversions about spheres.
\end{abstract}

\subjclass[2000]{ 53A10,  49Q10,  76B45,  76D45}
\keywords{capillary surface,
spherical reflection,  mean curvature}

 \maketitle

\section{Introduction and results}

Consider a given amount of liquid deposited on a solid substrate with conical shape: see Figure \ref{fig1}. In absence of gravity, we analyse the equilibrium configurations when the drop reaches a state of critical interfacial area. {To be precise, let $(x,y,z)$ be the usual coordinates of $\r^3$, where $z$ indicates the vertical direction}. Let $\s^2$ be the unit sphere  centered at the origin $O$ of $\r^3$ and let $D\subset\s^2$ be a simply-connected domain of a hemisphere of $\s^2$. Denote by $C_D$ the cone defined by the union of all half-lines starting at $O$ passing by all points of $D$ and $C_\Gamma$ the boundary of $C_D$, that is, the union of the rays that start at $O$ through all points of $\Gamma=\partial D$. The point $O$ is called the vertex of the cone. The cone $C_D$ is called circular of opening angle $2\varphi\in (0,\pi)$ if $\Gamma\subset\s^2$ is a circle of radius $\sin(\varphi)$.  A liquid drop in the cone is viewed as  the closure of a domain $\Omega\subset\r^3$ which is confined in the cone $C_D$ and whose boundary $\partial \Omega$ intersects $C_\Gamma$. The  boundary $\partial\Omega$ of $\Omega$ is written by $\partial\Omega=T\cup S$, where $T\subset C_\Gamma$, $S=\partial\Omega\setminus T$ and $\partial\Omega$ is not smooth along its boundary $\partial S$. Physically,  $T$ is the wetted region  by the liquid drop $\overline{\Omega}$ in $C_\Gamma$.

 \begin{figure}[hbtp]
\begin{center}
\includegraphics[ width=.25\textwidth]{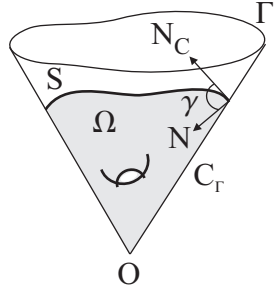}
\end{center}
\caption{A drop $\Omega$ supported on a cone $C_\Gamma$. The interface $S$ is a capillary surface, which means that $S$ has constant mean curvature and the angle $\gamma$ between the unit normal vectors $N$ and $N_C$ of $S$ and $C_\Gamma$ respectively, is constant along the boundary curve $\partial S$}\label{fig1}
\end{figure}

  In absence of gravity, and by the Young-Laplace equation, the shape of a liquid drop in equilibrium is characterized by the next two properties: 1) the surface $S$ has constant mean curvature  and 2) the angle $\gamma$ which $S$ meets $C_\Gamma$ is constant. Here  $\cos\gamma=\langle N,N_C\rangle$ along $\partial S$, where $N$ and $N_C$ denote the unit normal vector fields of $S$ and $C_\Gamma$ that point towards $\Omega$ and $C_D$ respectively.  With the above notation, we give the next definition:

\begin{definition} A capillary surface in a cone $C_D$ is a compact embedded surface of constant mean curvature such that $int(S)\subset C_D$, $\partial S\subset C_\Gamma$ and which meets the cone $C_\Gamma$ in a constant angle.
\end{definition}

In principle, a capillary surface in a cone can have higher topology, as well as, any number of boundary components. Physically capillary surfaces arise as the interface of an incompressible liquid in a container (see \cite{fi} and references therein). In the context of liquid drops in a cone we refer to \cite{kn}.

Examples of capillary surfaces in a circular cone $C_D$ are some pieces of spheres and planar discs (see Figure \ref{fig2}). Indeed, take a round disc $D\subset\s^2$ centered at the north pole and radius $\sin(\varphi)\in (0,1)$. Consider a sphere $\Sigma$ centered at the positive  $z$-axis of radius sufficiently large so   that  $\Sigma$ intersects $C_\Gamma$. Then $S=\Sigma\cap C_D$ is a capillary surface in $C_D$.  Depending on the values of $\gamma$ and $\varphi$, we have: if $\gamma>\pi/2+\varphi$, $S$ is a concave interface (case (a));  if $\pi/2-\varphi\leq \gamma<\pi/2+\varphi$, $S$  is a convex interface (case (b));  if $\gamma<\pi/2-\varphi$, $S$ is forbed by two spherical caps (case (c)); and finally, if $\gamma=\pi/2+\varphi$, we have a flat interface (case (d)).

\begin{figure}[hbtp]
\begin{center}
\includegraphics[width=.8\textwidth]{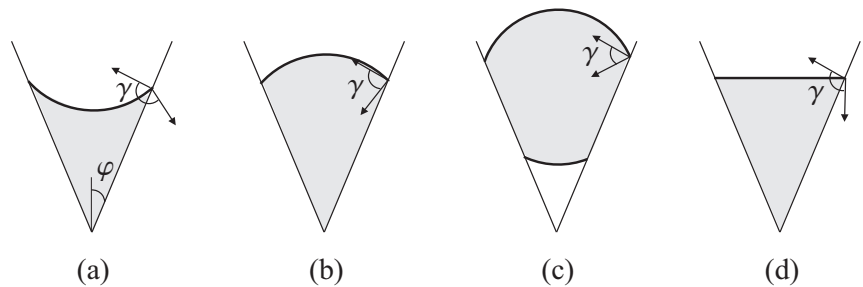}
\end{center}
\caption{Different configurations of capillary spheres and discs on a circular cone}\label{fig2}
\end{figure}

For a general cone, a first set of examples of capillary surfaces is considering non-parametric surfaces, that is, surfaces with a one-to-one central projection on $D$. We say then that the surface $S$ is a radial graph, which implies that any half-line starting at $O$ intersects $S$   one point at most. Results of existence and uniqueness of radial graphs with constant mean curvature have been obtained in \cite{lo,ra,sc,se,ta}. Radial graphs and capillary surfaces in a cone are the analogous examples  of  (vertical) graphs on a plane  and capillary surfaces on a vertical cylinder if we move the vertex $O$ of the cone at infinity. When the cone $C_\Gamma$ is convex, Choe and Park have shown that if  a parametric capillary surface $S$ meets orthogonally
$C_\Gamma$, then $S$ is part of a sphere \cite{cp}. Ros and Vergasta proved that $S$ has some topological restrictions if $S$ is stable \cite{rv}. We also point out  the results obtained by Vogel in \cite{vo}. He considered a drop $\Omega$ in  a cone $C_D$ such that its adherence $\overline{\Omega}$ contains the vertex $O$. Then he proved that the surface $S$ is a radial graph if the mean curvature $H$ is non-positive with respect to the  unit normal vector field pointing to $\Omega$.

This is our starting point. We ask  about  the shape of a capillary surface in a cone and how the boundary of the surface imposes restrictions to the shape of the surface. For example, we pose the next questions:
\begin{enumerate}
\item Does exist a capillary surface in a cone $C_D$ whose boundary is nullhomologous in $C_\Gamma\setminus\{O\}$? We do not know explicit examples and we hope that the answer is no at least if the surface is stable.
\item If the boundary $\partial S$ is homologous to $\Gamma$ in $C_\Gamma\setminus\{O\}$, is $S$ a radial graph?
\item Under what conditions  imply that  a capillary surface is part of a sphere or a plane?
\item   Under what conditions  imply that  a capillary surface  is a bridge, that is, the surface has the topology of a portion of a cylinder  bounded by two simple closed curves?
\end{enumerate}

We extend the  results of \cite{vo}  assuming only the non-positivity of the mean curvature $H$. Recall that a compact embedded surface $S$ contained in the cone $C_D$ and $\partial S\subset C_\Gamma$ defines a closed   $3$-dimensional  domain $\Omega\subset C_D$ such that its boundary $\partial \Omega$ is $\partial\Omega=T\cup S$ with $T\subset C_\Gamma$.

With the above notation, our result is the following:

\begin{theorem}\label{tmain}  Let  $S$ be a  connected  capillary surface supported in  a cone $C_D$. Let  fix the unit normal vector field $N$ of $S$ pointing towards the bounded domain $\Omega$. If $H\leq 0$, then   $S$ is a radial graph and the boundary $\partial S$ has only one connected component which is homologous to $\Gamma$ in $C_{\Gamma}\setminus \{O\}$. In the particular case that the cone is circular, $S$ is a planar disc or a spherical cap.
\end{theorem}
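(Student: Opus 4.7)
The plan is to adapt Alexandrov's reflection method by replacing hyperplane reflections with inversions $\iota_r$ in spheres $\Sigma_r$ of radius $r$ centered at the vertex $O$. Two geometric observations make this natural: $\iota_r$ preserves $C_D$ and $C_\Gamma$, since it permutes the rays through $O$; and $\Sigma_r$ meets $C_\Gamma$ orthogonally, because the normal to $\Sigma_r$ is radial while the normal to $C_\Gamma$ is perpendicular to the generating ray. So $\iota_r$ plays for the cone problem the role of an isometric hyperplane reflection in a half-space problem.

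For each $r>0$, split the surface as $S=S_r^-\cup(S\cap\Sigma_r)\cup S_r^+$ with $S_r^\pm=S\cap\{p:\pm(|p|-r)>0\}$, and consider $\iota_r(S_r^+)\subset\overline{B_r}$. Choose $R$ so large that $S\subset B_R$, hence $S_R^+=\emptyset$, and decrease $r$. For $r$ close to $R$, the set $\iota_r(S_r^+)$ is a small surface near $\Sigma_r$ that by continuity lies radially above $S_r^-$ (along each ray from $O$ it meets the ray at larger distance from $O$). Let $r_0\geq 0$ be the infimum of those $r$ for which this radial ordering is preserved for all $s\in[r,R]$. The aim is to show $r_0=0$: once the moving-spheres procedure reaches every value of $r$, each ray through $O$ meets $S$ in at most one point, for a second intersection at distances $|p_1|<|p_2|$ would violate the ordering at $r=\sqrt{|p_1||p_2|}$, where $\iota_r$ interchanges $p_1$ and $p_2$. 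This is the radial graph property, and it gives at once that $\partial S$ is the radial graph over $\Gamma$ and so has a single component homologous to $\Gamma$ in $C_\Gamma\setminus\{O\}$.

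The critical step is to exclude a first contact at some $r_0>0$. At the contact point $p$ the two sheets $S_{r_0}^-$ and $\iota_{r_0}(S_{r_0}^+)$ share a common unit normal. The inverted sheet is not CMC, but inversion is conformal and there is an explicit rule for how mean curvature transforms under $\iota_{r_0}$, of the schematic form $\widetilde H(\iota_{r_0}(q))=\lambda(q)H(q)+\mu(q)\langle N(q),q\rangle$ with explicit positive $\lambda$ and explicit $\mu$. Combined with the sign hypothesis $H\leq 0$, this yields a one-sided elliptic differential inequality for the radial difference of the two sheets near $p$, to which the strong interior maximum principle or Hopf's boundary lemma applies. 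Three sub-cases must be distinguished: an interior contact, a contact on $\Sigma_{r_0}$ where $\iota_{r_0}$ is tangent to the identity, and a contact on $\partial S\subset C_\Gamma$; the last requires the orthogonality $\Sigma_{r_0}\perp C_\Gamma$ and the constancy of the capillary angle $\gamma$ so that $\iota_{r_0}$ sends the boundary-angle condition to itself. In every case one concludes that the two sheets coincide locally, and analytic continuation propagates this to $\iota_{r_0}(S_{r_0}^+)=S_{r_0}^-$, making $S$ invariant under $\iota_{r_0}$; but then a small perturbation of $r$ would have produced a contact already above $r_0$, contradicting the minimality.

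In the circular-cone case, once $S$ is known to be a radial graph one applies the classical Alexandrov moving-planes method in planes containing the axis of symmetry of $C_D$: such planes are symmetry planes of both $C_D$ and the capillary boundary condition, so the procedure forces $S$ to be rotationally symmetric about the axis. A rotationally symmetric CMC surface that is a radial graph from $O$ and has one circular boundary meeting $C_\Gamma$ at a constant angle must be a spherical cap or a flat disc, since no piece of a nontrivial Delaunay surface produces a radial graph compatible with these boundary data. The main obstacle I expect is the maximum-principle step: because the inverted sheet is only conformally related to $S$, the usual CMC-to-CMC comparison does not apply verbatim, and the correct one-sided PDE satisfied by the radial difference must be derived carefully, using the specific form of the mean-curvature transformation rule and the sign $H\leq 0$.
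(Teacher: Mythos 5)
Your overall strategy is the same as the paper's (McCuan's spherical reflection method with inversions $\iota_r$ centered at the vertex), but the step that excludes a first contact at $r_0>0$ is broken. Up to the point where the tangency principle and a continuation argument give that $S$ is invariant under $\iota_{r_0}$, you are on track; however, your claimed contradiction --- ``a small perturbation of $r$ would have produced a contact already above $r_0$'' --- is false. A sphere centered at $O$ is invariant under the inversion in itself, yet for every larger radius the inverted outer part lies strictly inside the enclosed region, so no contact occurs above the critical radius: invariance under $\iota_{r_0}$ is a possible outcome of the method, not a contradiction with the minimality of $r_0$. What is actually needed is a rigidity statement (Proposition \ref{pinversion} in the paper): differentiating the identity $|p|^2H+2\langle N(p),p\rangle=Hr_0^2$ along principal directions forces the invariant CMC surface to be totally umbilical, hence a piece of a sphere or of a plane through $O$; the contradiction then comes from checking that no capillary sphere or plane in $C_D$ with $H\le 0$ (normal toward $\Omega$) can be invariant under such an inversion. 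A smaller but real omission in the same step: the inequality $\hat H\le H$ for the inverted sheet requires not only $H\le 0$ but also the sign of the support function, $\langle N(q),q\rangle\le 0$ on the outer part, which must be maintained throughout the process because $N$ points into $\Omega$ and the reflected surface stays in $\Omega$; your ``schematic'' transformation rule hides exactly the term whose sign has to be controlled.

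The circular-cone case also has a gap. Rotating planes through the axis cannot simply be ``started'': the family of such planes is parametrized by a compact circle of angles, there is no plane at infinity, and for a general radial graph there is no initial angle at which the reflected half of $S$ is known to lie on one side. (The paper's final Proposition runs this rotating-plane argument only under the extra hypothesis that $\partial S$ lies in a half-cone, and it is not part of the proof of Theorem \ref{tmain}.) The paper instead first shows $\partial S$ is a circle equidistant from $O$ by comparing $S$ with a rotated, homothetically shrunk copy $\rho S^{*}$ --- here $H\le 0$ enters again, since the homothety multiplies the mean curvature by $1/\rho$ and this gives the sign needed for the tangency principle --- and then concludes via the convex hull property when $H=0$, and via Koiso's theorem plus classical Alexandrov reflection in planes orthogonal to the plane of $\partial S$ when $H\ne 0$. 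Finally, your assertion that no piece of a nontrivial Delaunay surface can be a radial graph meeting $C_\Gamma$ at constant angle is stated without proof and would itself require an argument.
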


 In particular, Theorem \ref{tmain} informs us that the non-positivity of the mean curvature of a capillary surface $S$ in a cone implies that $S$ is a topological disc.

 \section{The spherical reflection method}

Alexandrov introduced the reflection method for surfaces with constant mean curvature  proving that a sphere is the only closed embedded surface of constant mean curvature \cite{al}. The idea is using the very surface as a barrier in a process of reflection about a one parameter family of parallel planes and  applying  the maximum principle of the mean curvature equation. The Alexandrov method by planes has two steps. Firstly it is proved that for each spatial direction $v\in\r^3$, there exists an orthogonal plane $\Pi_v$  to $v$ such that the surface is invariant by reflections about $\Pi_v$. The second step  proves that the only compact surface with this property is a sphere.

In this paper we use the \emph{spherical reflection method}  introduced by McCuan \cite{mc2}. Studying capillary surfaces in a wedge, McCuan modifies successfully the   Alexandrov reflection method   by inversions about a one parameter  family of spheres with the same center. Although  an inversion does not preserve the mean curvature, one can have a certain control of the mean curvature of the inverted surface in order to use the maximum principle of elliptic PDEs theory.  First, he proved that for each point of the axis of the wedge, there exists a sphere centered at this point such  that the surface is invariant by the inversion about this sphere and next he showed  that a sphere is the only surface with this property \cite[Th. 3.2]{mc1,mc2}. Such as McCuan indicated in \cite{mc1}, the spherical reflection method can be of interest in other problems in global differential geometry of surfaces. Here we develop his technique on surfaces of constant mean curvature supported in a cone of Euclidean space.

We describe briefly the main ingredients of the spherical reflection method and we refer to the reader to \cite{mc1,mc2}.  Let $\s_{r}\subset\r^3$ be the sphere of radius $r$ centered at $O$. The spherical reflection about $\s_{r}$ is the inversion mapping defined by
$$\phi_r: \r^3\setminus\{O\}\rightarrow \r^3\setminus\{O\},\ \ \phi_r(p)= \frac{r^2}{|p|^2}p.$$
Here the center of inversion is taken as the origin $O$ and $r$ is  the radius of the inversion. If the radius $r$ is known in the context, we only write $\phi$ instead $\phi_r$. We introduce the next notation. If $p\in\r^3$, we denote  $\hat{p}=\phi(p)$. If $A\subset\r^3$, denote  $A(r)^{-}=\{p\in A: |p|> r\}$ and $A(r)^{+}=\{p\in A: |p|< r\}$. Let $\hat{A}(r)=\phi(A(r)^{-})$ be the image of $A(r)^{-}$ by  the spherical reflection about $\s_{r}$. We write $A\geq B$ if for every $p\in A$, $q\in B$ such that $p$ and $q$ lie in the same ray from the origin, we have $|p|\geq |q|$.

Let $S$ be an oriented surface of $\r^3$, $O\not\in S$ and let $N$ be the Gauss map of $S$. By a spherical reflection $\phi$, $\phi(S)$ is a surface where  the Gauss map $\hat{N}$ is
 \begin{equation}\label{gauss}
\hat{N}(\hat{p})=N(p)-\frac{2\langle N(p),p\rangle}{|p|^2}p.
\end{equation}
An easy calculation yields
$$\hat{\lambda}_i(\hat{p})=\frac{\lambda_i(p)|p|^2+2\langle N(p),p\rangle}{r^2},\ \ i=1,2,$$
where $\hat{\lambda}_i$ and $\lambda_i$ are the principal curvature of the surface $\phi(S)$ and $S$ respectively. Hence the mean curvature $\hat{H}$ of $\phi(S)$ at $\hat{p}$ is
\begin{equation}\label{mean}
\hat{H}(\hat{p})=\frac{H|p|^2+2\langle N(p),p\rangle}{r^2},
\end{equation}

For the second part of the spherical reflection method, we need the following result and whose proof done below simplifies  the given one   in \cite{mc2}.

\begin{proposition}\label{pinversion}
Let $S$ be a surface of constant mean curvature. If { as a set}, $S$ is invariant by an inversion   about a sphere $\s^2$, then $S$ is either part of a sphere or part of a plane passing through the center of the sphere.
\end{proposition}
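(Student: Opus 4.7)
My plan is to extract a single pointwise algebraic identity from the inversion invariance, differentiate it tangentially to force umbilicity, and then conclude via the classical classification of totally umbilic surfaces together with a short check of which planes can actually occur.

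First, since $\phi_r(S) = S$ as a set and $S$ is connected, the Gauss map $\hat N$ of $\phi_r(S)$ computed from (\ref{gauss}) must satisfy $\hat N(\hat p) = \epsilon\, N(\hat p)$ for a fixed sign $\epsilon = \pm 1$, so the mean curvature transformation (\ref{mean}) gives $\hat H(\hat p) = \epsilon H$ and hence the pointwise identity
$$2\langle N(p), p\rangle = H\bigl(\epsilon r^2 - |p|^2\bigr), \qquad p \in S.$$

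Next, I differentiate this identity along $S$. For $X \in T_p S$, using $dN(X) = -A(X)$ for the shape operator $A$ and the orthogonality $\langle N, X\rangle = 0$, the derivative of the left-hand side is $-\langle A(X), p^T\rangle$, where $p^T$ denotes the tangential component of the position vector; the right-hand side differentiates to $-H\langle p^T, X\rangle$. Equating these for all $X$ and using the symmetry of $A$ yields
$$A(p^T) = H\, p^T.$$
Wherever $p^T \neq 0$, this says $H$ is an eigenvalue of $A$; since $H$ is also the mean of the two principal curvatures, the other principal curvature must equal $H$ as well, so $S$ is umbilic on the open set $U = \{p \in S : p^T \neq 0\}$.

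Finally, two cases close the argument. If $U$ is dense in $S$, the shape operator equals $H \cdot \mathrm{Id}$ on a dense set, hence by continuity on all of $S$, and the classical theorem on totally umbilic surfaces places $S$ in a sphere or a plane. If instead the complement of $U$ has nonempty interior $V$, then $\nabla^S |p|^2 = 2p^T \equiv 0$ on $V$, so $|p|$ is constant on each component of $V$, forcing $V$ to lie on a sphere centered at the origin; unique continuation for the elliptic constant-mean-curvature equation then extends this to all of $S$. To rule out planes off the origin, note that if $S$ is planar the normal $N$ is constant, so the identity forces the quadratic function $\tfrac H2(\epsilon r^2 - |p|^2)$ to be constant on $S$; this gives $H = 0$ and $\langle N, p\rangle \equiv 0$, so the plane passes through the center of the inversion sphere. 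The differentiation producing $A(p^T) = H\, p^T$ is the crux; treating the exceptional locus $\{p^T = 0\}$ via unique continuation is the only subtlety beyond that.
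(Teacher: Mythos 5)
Your proof is correct and follows essentially the same route as the paper's: both extract the pointwise identity $2\langle N(p),p\rangle = H(\epsilon r^2-|p|^2)$ from the inversion invariance, differentiate it tangentially to force umbilicity wherever the tangential component of the position vector is nonzero, and conclude via the classification of totally umbilic surfaces (the paper uses Hopf's isolated-umbilics theorem where you invoke density/unique continuation, but this is the same idea). Your version is in fact slightly more careful on two points the paper glosses over --- tracking the orientation sign $\epsilon$ and explicitly verifying that a planar $S$ must pass through the center of inversion --- but these are refinements, not a different argument.
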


\begin{proof}
After a rigid motion, we assume that $S$ is invariant by the inversion about the sphere $\s_r$. Since
$\hat{H}(\hat{p})$ is given by \eqref{mean} and  { $\phi(S(r)^-)=S(r)^{+}$}, then
\begin{equation}\label{mean2}
 |p|^2H+2\langle N(p),p\rangle=Hr^2
 \end{equation}
on $S$. At each point $p\in S$, let $\{e_1,e_2\}$ be a basis of principal directions, that is, $(dN)_p(e_i)=-\lambda_i(p)e_i$, $i=1,2$. Then the derivative of \eqref{mean2} along $e_i$ gives
\begin{equation}\label{mean3}
2(H-\lambda_i(p))\langle p,e_i\rangle=0,\ \ i=1,2.
\end{equation}
Assume that at a point $p\in S$, $  N(p)$ is not parallel to $p$. Then this occurs in an open set $V\subset S$ around $p$.
We conclude by \eqref{mean3} that { for some $i\in\{1,2\}$,  $\lambda_i=H$ on $V$. Since $2H=\lambda_1+\lambda_2$, we derive that $\lambda_1=\lambda_2=H$ on $V$ and all points of $V$ are umbilics.}  Because the umbilics in a non-umbilical surface of constant mean curvature are isolated \cite{ho}, we conclude that $S$ is part of a sphere or part of a plane  passing through the origin.  If $N(p)$ is parallel  to $p$ on $S$, that is,  $N(p)=\pm p/|p|$ on $S$, then one easily concludes that $S$ is a sphere centered at the origin $O$, obtaining the result again.
\end{proof}

We also need the tangency principle  based on the maximum principle of mean curvature equation.

\begin{proposition}[Tangency principle]\label{pmax} Let $S_1$ and $S_2$ be two surfaces that are expressed locally as graphs of functions $u_1$ and $u_2$ over the closure of a planar domain $\Omega\subset\r^2$. Assume $p_0\in S_1\cap S_2$ is a common tangent point and denote $p_0=u_i(x_0,y_0)$, $(x_0,y_0)\in\overline{\Omega}$. Suppose $p_0$ satisfies  one  of the next conditions:
\begin{enumerate}
\item $p_0\in int(S_1)\cap int(S_2)$, or equivalently, $(x_0,y_0)\in\Omega$ (interior point).
\item $p_0\in\partial S_1\cap \partial S_2$ and $\partial S_1$ and $\partial S_2$ are tangent at $p_0$. This is equivalent to $(x_0,y_0)\in \partial\Omega$ and $\partial\Omega$ is smooth at $(x_0,y_0)$ (boundary point).
    \item $(x_0,y_0)\in\partial\Omega$ is a point where $\Omega$ is a quadrant of $\r^2$ and $(x_0,y_0)$ is the corner of this quadrant (corner point).
\end{enumerate}
We orient $S_i$ by the   unit  normal vector fields such that $N_1(p_0)=N_2(p_0)$ and consider the reference system of $\r^3$ where the vector $N_i(p_0)$ indicates the positive vertical line. Denote by $H_i$ the mean curvature of $S_i$ according to the orientation given by $N_i$, $i=1,2$. If in a neighborhood of $p_0$, $S_1$ lies above $S_2$ and $H_1\leq H_2$, then $S_1$ coincides with $S_2$ in an open set around $p_0$.
\end{proposition}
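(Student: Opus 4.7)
The plan is to reduce the three statements to the appropriate versions of the strong maximum principle for linear uniformly elliptic operators, by linearising the constant mean curvature operator on the difference of the two graphing functions. After choosing coordinates so that $N_i(p_0)$ is vertical, both surfaces are locally the graphs of $u_1,u_2\in C^2(\overline{\Omega})$ and the hypothesis $S_1\geq S_2$ near $p_0$ becomes $w:=u_1-u_2\geq 0$ with $w(x_0,y_0)=0$. The mean curvature of the graph of $u_i$ (with the chosen orientation) is
\begin{equation*}
2H_i=\mathcal{M}u_i:=\operatorname{div}\!\left(\frac{Du_i}{\sqrt{1+|Du_i|^2}}\right),
\end{equation*}
so the assumption $H_1\leq H_2$ reads $\mathcal{M}u_1-\mathcal{M}u_2\leq 0$.

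The key linearisation step writes
\begin{equation*}
\mathcal{M}u_1-\mathcal{M}u_2=\int_0^1\frac{d}{dt}\mathcal{M}\bigl(tu_1+(1-t)u_2\bigr)\,dt=Lw,
\end{equation*}
where $L=a^{ij}(x,y)\partial_{ij}+b^i(x,y)\partial_i$ is a linear second-order operator whose coefficients are continuous in $\overline{\Omega}$ and whose principal part $a^{ij}$ is obtained from the integrated Jacobian of the quasilinear symbol; a direct computation shows $a^{ij}$ is positive definite (uniformly elliptic on any compact subset). Therefore $w\geq 0$ in $\overline{\Omega}$, $w(x_0,y_0)=0$, and $Lw\leq 0$; equivalently $L(-w)\geq 0$ with $-w$ attaining its maximum $0$ at $(x_0,y_0)$.

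With this reduction, the three conclusions follow from three classical forms of the maximum principle applied to $-w$. In case (1), $(x_0,y_0)$ is an interior maximum, so Hopf's strong maximum principle gives $w\equiv 0$ in a neighbourhood. In case (2), the tangency of $\partial S_1$ and $\partial S_2$ at $p_0$ together with the common tangent plane to the surfaces forces $Dw(x_0,y_0)=0$; if $w$ were not identically zero near $(x_0,y_0)$, Hopf's boundary point lemma would yield the strict inequality $\partial w/\partial\nu\,(x_0,y_0)<0$ along the inner normal $\nu$ to $\partial\Omega$, a contradiction. Case (3) is handled by the corresponding corner version due to Serrin: at a quadrant corner where the first-order derivative information from Hopf's lemma fails, one uses a second-order boundary estimate to rule out $w\not\equiv 0$. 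I expect the corner case to be the main technical obstacle, because it requires verifying, via the tangency hypothesis, that both the first- and second-order transverse derivatives of $w$ vanish at $(x_0,y_0)$ so that Serrin's lemma applies and forces $w\equiv 0$ in a neighbourhood of the corner.
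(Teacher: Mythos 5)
Your reduction---linearising the difference of the two mean curvature operators to a linear, uniformly elliptic $Lw$ acting on $w=u_1-u_2\ge 0$ and then invoking Hopf's interior and boundary point lemmas and Serrin's corner lemma---is exactly the route the paper intends; in fact the paper writes out no proof at all and simply cites Hopf and Serrin for the three cases, so your sketch is the standard argument behind those citations. The only blemishes are a sign slip in case (2) (Hopf's lemma gives that the \emph{inward} normal derivative of $w$ is strictly positive, not negative, though either strict inequality contradicts $Dw(x_0,y_0)=0$) and the corner case, where, as you yourself flag, the vanishing of the second-order derivative of $w$ in a direction entering the quadrant must still be extracted from the tangency data before Serrin's lemma yields $w\equiv 0$.
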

 The first two cases are a direct consequence of the Hopf maximum principle \cite{eho}, usually called, the interior point version and the boundary point version of the tangency principle. See also \cite{sch}. The third case was shown by Serrin in \cite{se2} and is called the Serrin corner version of the tangency principle.

\section{Proof of Theorem \ref{tmain}}

Denote $\mathcal{S}=\partial\Omega$ and we assume that the vertex $O$ is the origin of coordinates. Recall that $\mathcal{S}=T\cup S$, where $T$ is the part of $\partial\Omega$ in the cone $C_\Gamma$.  Consider the family of spheres $\{\s_r: r>0\}$ centered at  $O$. Because $\overline{\Omega}$ is compact, consider $r$ so large such that $S(r)^{-}=\emptyset$.  We decrease the radius of the sphere $\s_r$ until the first value  $r_0>0$ such that $\s_{r_0}$ touches $\overline{\Omega}$. Because $S$ is embedded, for a small  $\epsilon>0$, we have
 $$\hat{\mathcal{S}}(r)\geq  \mathcal{S}(r)^{+}\ \ \mbox{and}\ \ \hat{\mathcal{S}}(r)\subset \Omega,$$
for all $r\in (r_0-\epsilon,r_0)$. For these values of $r$, the support function $f(p)=\langle N(p), p\rangle$ is negative for any $p\in S(r)^{-}$ because $N$ points to $\Omega$. Next, we decrease the radius of $\s_r$ and reflecting $\mathcal{S}(r)^{-}$ about $\s_r$. Let
$$r_1=\inf\{r:  \hat{\mathcal{S}}(r)\geq  \mathcal{S}(r)^{+}, \hat{\mathcal{S}}(r)\subset \Omega\}.$$
We have two possibilities:

{\it Case 1.} $r_1=0$. In such a case, for all $r\in (0,r_0)$, $\hat{\mathcal{S}}(r)\subset\Omega$. This shows that $S$ is a radial graph on $D$, proving the first part of Theorem.

{\it Case 2.} $r_1>0$.   We are going to prove that this case  is not possible. For this value $r_1$, there exists a touching point $p_0$  between $\mathcal{S}(r_1)^+$  and $\hat{\mathcal{S}}(r_1)$. Denote $q_0\in S(r_1)^{-}$ such that $p_0=\hat{q}_0=\phi_{r_1}({q_0})$. The possibilities  of touching point $p_0$ are the following (see Theorem 4 in \cite{mc1}):
\begin{enumerate}
\item[[T1]] The point $p_0$ belongs to $int(S)$ with $p_0\in int(S(r_1)^+)\cap int(\hat{S}(r_1))$. Then $|p_0|<r_1$ and  $f<0$ on $S(r)^{-}$ for $r\in [r_1,r_0]$. In particular, $p_0$ is common tangent point of $S(r_1)^+$ and $\hat{S}(r_1)$.
\item[[T2]] The point    $p_0$ belongs to $int(S)$ with $p_0\in \partial S(r_1)^+\cap \partial\hat{S}(r_1)$. In such case $|p_0|=r_1$, $f(p_0)=0$. Moreover,  $f<0$ on $S(r)^{-}$ for $r\in (r_1,r_0]$. Again  $p_0$ is common tangent point of $S(r_1)^+$ and $\hat{S}(r_1)$ and $\partial S(r_1)^+$ and $\partial\hat{S}(r_1)$ are tangent at $p_0$.
\item[[T3]] The point $p_0$ belongs to $\partial S$ with $|p_0|<r_1$. In particular, $p_0\in \partial\hat{S}(r_1)\cap\partial S(r_1)^{+}$.  We claim that the tangent planes of $\hat{S}(r_1)$ and $S(r_1)^{+}$  coincide. Denote by $N_{C}$ the Gauss map of $C_\Gamma$ pointing to $\Omega$. Let $\alpha=\alpha(s)$ be an arc-length parametrization of $\partial S$. At the boundary points $\partial S$, we take the orthonormal frame
    $$\left\{N_{C},  \frac{\alpha}{|\alpha|}, \textbf{b}:=N_{C}\times \frac{\alpha}{|\alpha|}\right\},$$
    where $\times$ denotes the cross product of $\r^3$. Then the expression of $N$ is
    $$N=(\cos\gamma) N_{C}+\lambda(s) \frac{\alpha}{|\alpha|}+\mu(s) \textbf{b},$$
    where $\gamma$ is the contact angle and $\lambda$ and $\mu$ satisfy $\lambda^2+\mu^2=(\sin\gamma)^2$.
By (\ref{gauss}),
\begin{equation}\label{b1}
\hat{N}(s)=(\cos\gamma) N_{C}(s)-\lambda(s) \frac{\alpha(s)}{|\alpha(s)|}+\mu(s) \textbf{b}(s).
\end{equation}
Let $p_0=\hat{\alpha}(s_0)=\phi(\alpha(s_0))$.
Since $q_0=\alpha(s_0)$ and $\hat{\alpha}(s_0)$ lie on the same ray starting from $O$,
\begin{equation}\label{n1}
N(p_0)=N(\hat{\alpha}(s_0))=(\cos\gamma) N_{C}(s_0)+\lambda_1 \frac{\alpha}{|\alpha|}(s_0)+\mu_1 \textbf{b}(s_0),
\end{equation}
with $\lambda_1,\mu_1\in\r$. Recall
$$\phi(p)= \frac{r_{1}^2}{|p|^2}p.$$
 Then
\begin{equation}\label{n2}
d\phi(\alpha'(s))=\frac{r_{1}^2}{|\alpha(s)|^2}\alpha'(s)-2{r_1^2}\frac{\langle \alpha(s), \alpha'(s)\rangle}{|\alpha(s)|^4}\alpha(s),
\end{equation}
 and from \eqref{b1},
\begin{equation}\label{n3}
0=\langle\hat{N}(s), d\phi(\alpha'(s))\rangle=\lambda(s)\frac{r_{1}^2}{|\alpha(s)|^3}\langle \alpha(s), \alpha'(s)\rangle +\mu(s)\frac{r_{1}^2}{|\alpha(s)|^2}\langle \textbf{b}(s), \alpha'(s)\rangle.
\end{equation}
Since $|p_0|<r_1$, the contact between $\partial\hat{S}(r_1)$ and $\partial S(r_1)^{+}$ at $p_0$ is tangential. Then $d\phi(\alpha'(s_0))$ is parallel to $\hat{\alpha}'(s_0)$ and \eqref{n2} implies
\begin{eqnarray}\label{n4}
0&=&\langle N(\hat{\alpha}(s_0)), d\phi(\alpha'(s_0))\rangle\nonumber\\
&=&-\lambda_1\frac{r_{1}^2}{|\alpha(s_0)|^3}\langle \alpha(s_0), \alpha'(s_0)\rangle +\mu_1\frac{r_{1}^2}{|\alpha(s_0)|^2}\langle\textbf{b}(s_0), \alpha'(s_0)\rangle.
\end{eqnarray}
As $\langle \alpha(s_0), \alpha'(s_0)\rangle$ and $\langle\textbf{b}(s_0), \alpha'(s_0)\rangle$ can not 	simultaneously vanish because it would imply $\alpha'(s_0)=0$, we conclude from \eqref{n3} and \eqref{n4} that  $\mu_1\lambda(s_0)+\lambda_1\mu(s_0)=0$. As $\lambda(s_0)^2+\mu(s_0)^2= (\sin\gamma)^2=\lambda_1^2+\mu_1^2$, then  $\lambda(s_0)=-\lambda_1$ and $\mu(s_0)=\mu_1$. Then  \eqref{b1} and \eqref{n1} imply
$N(\hat{\alpha}(s_0))=\hat{N}(s_0)$.
Thus the tangent planes of $\hat{S}(r_1)$ and $S(r_1)^{+}$ coincide at $p_0=\hat{\alpha}(s_0)$. This completes the proof of the claim.
\item[[T4]] The point  $p_0$ belongs to $\partial S$ with $| p_0|=r_1$. Then $p_0=q_0$. The same reasoning as above proves both   $\hat{S}(r_1)$ and $S(r_1)^{+}$ as well as   $\partial\hat{S}(r_1)$ and $\partial S(r_1)^{+}$ are tangent at $p_0$.
\end{enumerate}
In order to apply the tangency principle (Proposition \ref{pmax}), we compare the mean curvatures of $\hat{S}(r_1)$ and  $S(r_1)^+$ in a neighborhood of the contact point $p_0$. Denote by $\hat{H}$ the mean curvature of $\hat{S}(r_1)$ with respect to the unit normal vector $\hat{N}$. Using \eqref{gauss}, we have that both $N$ and $\hat{N}$ point towards $\Omega$ because $f<0$ for any $r\in (r_1,r_0)$.  On the other hand,  remark that for all $r\in [r_1,r_0]$ and  $q\in S(r)^{-}$, we have
\begin{equation}\label{hh}
\hat{H}(\hat{q})=\frac{|q|^2H+\langle N(q),q\rangle}{r_1^2}\leq \frac{|q|^2}{r_1^2}H\leq H,
\end{equation}
because $H\leq 0$ on $S$ and $|q|\geq r_1$. In the McCuan's notion, the inequality \eqref{hh} is expressed by saying that $S$ satisfies the boundedness property \cite{mc1}.

We analyse the different cases of touching points that have appeared in the previous discussion.
\begin{enumerate}
\item[[T1]]  Because $\hat{S}(r_1)\geq S(r_1)^{+}$ around $p_0$ and $N(p_0)=\hat{N}(p_0)$ point to $\Omega$, the surface $\hat{S}(r_1)$ lies above $S(r_1)^{+}$ in a neighborhood of the point $p_0$. In this neighborhood,  $\hat{H}\leq H$ and  the tangency principle implies that $\hat{S}(r_1)=S(r_1)^+$ in an open set around $p_0$. Repeating the reasoning, it is proved that the set of points where $\hat{S}(r_1)$ coincides with $S(r_1)^{+}$ is an open set in $S$, that is, $S$ is invariant by the inversion about $\s_{r_1}$.
\item[[T2]] This case is similar to the above one with the difference that $p_0$ is a boundary point of $\hat{S}(r_1)$ and $S(r_1)^+$. The tangency principle in its boundary point  version concludes that $S$ is   invariant by the inversion about $\s_{r_1}$.
\item [[T3]] This case is the same the above one because $|p|<r_1$ and thus, it is a common boundary point of $\hat{S}(r_1)$ and $S(r_1)^+$.
\item[[T4]] Now the surfaces $\hat{S}(r_1)$ and  $S(r_1)^+$ are tangent at a common boundary point  $p_0$ where  both surfaces may be expressed locally as a graph over a quadrant   in the common tangent plane at $p_0$. We apply  the Serrin corner point  version of the tangency principle.
\end{enumerate}
In all cases, an argument using connectedness proves finally that $S$ is invariant by the inversion about ${\s}_{r_1}$.  Clearly, any plane passing through the origin can not be a capillary surface in the cone $C_D$ and, finally,  Proposition \ref{pinversion} gives that $S$ is part of a sphere. Since $S$ is invariant by the inversion about $\s_{r_1}$, $S$ is either part of $\s_{r_1}$ or part of sphere which is not containing the origin inside of $S$. With the unit normal vector pointing toward bounded domain, the mean curvature is strictly positive. This contradicts the condition on the mean curvature.
An analysis of the different configurations of  a capillary sphere in a cone $C_D$ with non-positive mean curvature with respect to the unit normal vector field pointing $\Omega$ gives  that $C_\Gamma$ is a circular cone and $S$ is a concave sphere (Figure \ref{fig2}, case (a)).  But this sphere is not invariant by inversions about a sphere of type $\s_r$. This proves that the case 2) does not occur.

We prove the last statement of Theorem \ref{tmain}. Assume now that $C_\Gamma$ is a circular cone. By the previous argument, we know that $S$ is a radial graph on $D$.  We claim that $\partial S$ is a circle that lies a plane which is perpendicular to the axis $L$ of the circular cone $C_{\Gamma}$.

Suppose on the contrary. Denote by  $p_{M}$ and $p_m$ the points of $\partial S$  which lie at the maximum and minimum distance from $O$. Let $l_ M$ and $l_m$   the half-lines from $O$ passing through $p_M$ and $p_m$ respectively.   Since $\partial S$ is not a circle and it is a radial graph on $\Gamma$, $l_M\neq l_m$. Rotate $S$ about $L$ until $l_M$ coincides with $l_m$ and we call $S^*$ the new position of $S$. If $A\subset\r^3$ is a subset of $\r^3$, denote by  $\rho A$   the image of $A$ by the homothety of ratio $\rho>0$. Let $\rho<1$ be small enough such that $\rho S^*$ is disjoint to $S$.   By  homothetically expanding $\rho S^*$,  that is, increasing the value of $\rho$, we arrive until the first value $\rho=\rho_1$ such that $\rho_1 S^{*}$ touches $S$. Because $|p_m|<|p_M|$, the value $\rho_1$ satisfies $\rho_1<1$. We analyze the different types of contact points between $\rho_1 S^{*}$ and $S$.

If the contact point $p_0$  occurs at some interior point between $\rho_1 S^*$ and $S$, then the unit normal vectors of $\rho_1 S^{*}$ and $S$ agree at $p_0$ because both vectors point in the opposite direction of $p_0$. Around $p_0$, consider the reference system whose vertical direction is given by the unit normal vector at $p_0$. With respect to this reference system, the surface $\rho_1 S^{*}$ lies above $S$ in a neighborhood of $p_0$. However, the mean curvature of $\rho_1S^*$ is $H/\rho_1$ and because $H\leq 0$, we have
$H/\rho_1<H$ if $H<0$ or $H/\rho_1=H$ if $H=0$. This is a contradiction with the  tangency principle.

If the touching point $p_0$ is a boundary point, then $\rho_1 S^*$ and $S$ are tangent at $p_0$ because the constancy of the contact angle $\gamma$ (homotheties and rotations about $L$ do not change the value of $\gamma$). We apply again the  tangency principle, obtaining a contradiction again. As a conclusion, $|p_M|=|p_m|$ and  every point of $\partial S$ has the same distance from $O$. Because $\Gamma$ is a circle, then $\partial S$ is a circle too. This completes the proof of the claim.

Once proved that $\partial S$ is a circle, the proof of Theorem \ref{tmain} ends as follows. If $H=0$, $S$ is a planar disc by the convex hull property { that satisfy minimal surfaces. Suppose now $H\neq 0$}. Let $\Pi$ be the plane containing the circle $\partial S$. Since $S$ is a radial graph, $S\cap (\Pi\setminus\overline{W})=\emptyset$ where $W\subset\Pi$ is the   disc bounded by $\partial S$. A result due to Koiso \cite{ko} shows that $S$ lies in one side of $\Pi$ and finally, the Alexandrov reflection method by { using planes orthogonal to $\Pi$} proves that $S$ is a spherical cap. This completes the proof of Theorem \ref{tmain}.  $\Box$

As a consequence of this theorem, we   give some partial answers to the questions posed in the Introduction.  Using that   Theorem \ref{tmain} asserts that the boundary curve $\partial S$ is homologous to $\Gamma$ in $C_\Gamma\setminus\{O\}$, we obtain:

\begin{corollary} There are no   capillary surfaces on a cone $C_D$ with non-positive mean curvature whose boundary is nullhomologous in $C_\Gamma\setminus\{O\}$.
\end{corollary}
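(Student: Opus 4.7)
The plan is to deduce the corollary as an immediate topological consequence of Theorem \ref{tmain}, with no further analytic work required.

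First, I would identify the first homology of the punctured boundary cone $C_\Gamma \setminus \{O\}$. Since $D \subset \s^2$ is a simply-connected domain contained in a hemisphere, its boundary curve $\Gamma = \partial D$ is a Jordan curve. The map $(p,t) \mapsto t\,p$ from $\Gamma \times (0,\infty)$ to $C_\Gamma \setminus \{O\}$ is a homeomorphism (it parametrizes each open ray by its Euclidean norm), so the punctured boundary cone deformation retracts onto $\Gamma$. Therefore $H_1(C_\Gamma \setminus \{O\}; \mathbb{Z}) \cong \mathbb{Z}$ is generated by the class $[\Gamma]$, and in particular $[\Gamma] \neq 0$.

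Next, I would argue by contradiction. Suppose $S$ is a capillary surface in $C_D$ with $H \leq 0$ (with respect to the unit normal $N$ pointing into $\Omega$) and with $[\partial S] = 0$ in $H_1(C_\Gamma \setminus \{O\}; \mathbb{Z})$. Then Theorem \ref{tmain} applies and forces $\partial S$ to be connected with $[\partial S] = [\Gamma]$ in $H_1(C_\Gamma \setminus \{O\}; \mathbb{Z})$. Combined with $[\Gamma] \neq 0$, this gives $[\partial S] \neq 0$, contradicting the nullhomology assumption.

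The corollary is essentially a one-line consequence of Theorem \ref{tmain}: all the serious analytic content, namely the spherical reflection method, the tangency principle, and the mean curvature inequality $\hat{H} \leq H$ under inversion, is already packaged inside that theorem. I do not anticipate any real obstacle; the only thing to verify is the homotopy type of $C_\Gamma \setminus \{O\}$, which is immediate from the product structure on the cone. A minor bookkeeping point is that the corollary statement does not need to assume a priori that $\partial S$ is connected, because the connectedness of $\partial S$ is itself part of the output of Theorem \ref{tmain}.
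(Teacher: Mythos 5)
Your proposal is correct and matches the paper's argument: the corollary is stated there as an immediate consequence of Theorem \ref{tmain}, which forces $\partial S$ to be homologous to $\Gamma$, a nontrivial class in $H_1(C_\Gamma\setminus\{O\})\cong\mathbb{Z}$. Your explicit verification of the homotopy type of the punctured cone is a small addition the paper leaves implicit, but the route is the same.
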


\begin{corollary} There are no capillary bridges on a cone with non-positive mean curvature.
\end{corollary}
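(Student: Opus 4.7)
The approach is to read the conclusion of Theorem \ref{tmain} carefully and notice that it already rules out the topological type of a capillary bridge. Recall from question (4) in the introduction that a capillary bridge is, by definition, a surface with the topology of a portion of a cylinder bounded by \emph{two} simple closed curves, so its boundary $\partial S$ necessarily has exactly two connected components.

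The plan is therefore essentially one line: suppose for contradiction that $S$ is a capillary bridge in a cone $C_D$ with $H \leq 0$ (with respect to the unit normal pointing towards the enclosed domain $\Omega$). Since a capillary bridge is a compact embedded surface with constant mean curvature meeting $C_\Gamma$ at a constant angle, $S$ satisfies all the hypotheses of Theorem \ref{tmain}. That theorem concludes, in particular, that $\partial S$ has only one connected component. This directly contradicts the assumption that $\partial S$ consists of two disjoint simple closed curves, and the corollary follows.

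The only thing to check is that ``capillary bridge'' as used here is indeed connected; since a bridge is, by definition, a single surface with cylindrical topology, connectedness is built in and Theorem \ref{tmain} applies verbatim. There is no analytic obstacle to overcome: the corollary is a purely topological consequence of the conclusion of the main theorem, and the potentially subtle point, namely that $\partial S$ must really have a single component rather than several, has already been handled inside the proof of Theorem \ref{tmain} via the reflection argument. So the write-up will be only a sentence or two invoking Theorem \ref{tmain}.
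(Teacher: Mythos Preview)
Your proposal is correct and matches the paper's approach: the paper states this corollary without a separate proof, simply as a direct consequence of Theorem~\ref{tmain}'s conclusion that $\partial S$ has a single connected component (homologous to $\Gamma$), which is incompatible with the two boundary components of a bridge.
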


We point out that for some contact angles, there exist capillary spherical domains in non-circular cones. For instance, this occurs when the contact angle $\gamma=\pi/2$. For this, take a domain $D$ in a hemisphere of $\s^2$ and consider $D_\rho=\rho D$ the image of $D$ by the homothety of ratio $\rho>0$. Then $D_\rho$ is a capillary surface in the cone $C_D$ intersecting orthogonally $C_{\Gamma}$. In this example, the mean curvature of $D_\rho$ is positive with respect to the unit normal vector pointing to $\Omega$.

The next result has its origin in Propositions 2.1 and 2.2 of \cite{vo}, but we do not assume that $\overline{\Omega}$ wets the origin.

\begin{corollary}\label{prh} Let $C_D$ be a circular cone of opening angle $2\varphi$.   Let $S$ be a  connected   capillary surface $S$ in $C_D$  whose unit normal vector field $N$ points to $\Omega$. Let $\gamma$ be the contact angle.
\begin{enumerate}
\item If $\gamma<(\pi+\varphi)/2$, then the mean curvature $H$   is positive.
\item If $\gamma>(\pi+\varphi)/2$, then $S$ is a concave spherical cap.
\item If $\gamma=(\pi+\varphi)/2$, then $S$ is a planar domain.
\end{enumerate}
\end{corollary}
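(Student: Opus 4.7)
The plan is to combine Theorem \ref{tmain} with the classification of capillary spherical caps and planar discs in a circular cone summarised before Figure \ref{fig2}. Write $\gamma^{*}=(\pi+\varphi)/2$. The geometric inputs I would use are: a horizontal planar capillary disc in the circular cone corresponds to case (d) and has $\gamma=\gamma^{*}$; a concave spherical cap (case (a)) has $H<0$ and $\gamma>\gamma^{*}$; a convex spherical cap (case (b)) has $H>0$ and $\gamma<\gamma^{*}$.

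For part (1) I would argue by contradiction. Assume $\gamma<\gamma^{*}$ and $H\le 0$. Theorem \ref{tmain} then forces $S$ to be a planar disc or a spherical cap, and the last step of the proof of that theorem shows that $\partial S$ is a circle in a plane perpendicular to the axis of $C_{\Gamma}$. The planar possibility gives $\gamma=\gamma^{*}$, while the spherical possibility with $H\le 0$ gives a concave cap with $\gamma>\gamma^{*}$; both contradict $\gamma<\gamma^{*}$, so $H>0$.

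For parts (2) and (3) the additional task is to rule out $H>0$ when $\gamma\ge\gamma^{*}$, since Theorem \ref{tmain} covers only the case $H\le 0$. The plan is a sliding-barrier argument. For (3), use the one-parameter family of horizontal discs inside $C_{D}$, each of which is a capillary surface with $H=0$ and contact angle $\gamma^{*}$, and slide such a disc toward $S$ from the side opposite to $\Omega$ until the first contact. For (2), use the one-parameter family of concave spherical caps with contact angle $\gamma$ obtained by homotheties about $O$, and shrink (or dilate) such a cap toward $S$ until the first contact. The equality of contact angles between the model and $S$ guarantees that any contact along $\partial S$ automatically produces matching tangent planes, so the interior, boundary, and Serrin corner versions of the tangency principle in Proposition \ref{pmax} cover every possible touching point. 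At the contact the model has non-positive mean curvature while $S$ has strictly positive mean curvature by hypothesis, and the geometric configuration at the first touching is arranged so that the inequality $H_{1}\le H_{2}$ required by Proposition \ref{pmax} holds. The principle then forces $S$ to coincide with the model in a neighbourhood of the contact point, and a connectedness argument propagates this to all of $S$, which contradicts $H>0$. Once $H\le 0$ is established, Theorem \ref{tmain} combined with the case analysis identifies $S$ as a horizontal disc in (3) and as a concave spherical cap in (2).

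The main obstacle will be the sliding step: one has to verify that the model stays on one side of $S$ throughout the motion, that the first touching falls into one of the three types covered by Proposition \ref{pmax}, and that the orientation of the two surfaces is such that the mean curvature comparison goes in the correct direction; one must also ensure that no degeneration occurs at the vertex $O$, as was handled for Theorem \ref{tmain}. A possibly cleaner alternative is to first exploit the axial symmetry of the circular cone by applying Alexandrov reflection across each plane containing the axis, which should force $S$ to be a surface of revolution and hence a Delaunay surface; the classification of Delaunay surfaces that meet the cone in a single circle at a prescribed constant angle then yields (1)--(3) by direct inspection.
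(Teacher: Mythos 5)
Your part (1) is exactly the paper's argument: assume $H\le 0$, apply Theorem \ref{tmain} to reduce $S$ to a planar disc or a spherical cap, note that these umbilical configurations force $\gamma\ge(\pi+\varphi)/2$, and take the contrapositive. For parts (2) and (3) you have correctly observed something the paper passes over: Theorem \ref{tmain} only treats $H\le 0$, so the literal statements (2) and (3) additionally require excluding capillary surfaces with $H>0$ and $\gamma\ge(\pi+\varphi)/2$. The paper's own proof does not supply such an exclusion --- it only proves ``$H\le 0$ implies $S$ is one of the configurations in (2) or (3)'', which yields (1) but yields (2) and (3) only under the unstated extra hypothesis $H\le 0$. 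So you are attempting to prove more than the paper does, which is commendable, but the extra step you propose does not work as written.

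The concrete flaw is the sliding direction. You move the comparison surface (horizontal disc with $H=0$, or concave cap with $H\le 0$) toward $S$ ``from the side opposite to $\Omega$''. At an interior first-contact point $p_0$ reached from outside $\Omega$, the barrier sits on the side of $S$ away from $\Omega$, while $N(p_0)$ points into $\Omega$; hence, in the frame of Proposition \ref{pmax} where $N(p_0)$ is the positive vertical, it is $S$ that lies \emph{above} the barrier, and the tangency principle would require $H\le H_{\mathrm{barrier}}\le 0$ --- precisely the inequality you are assuming fails. No conclusion follows; indeed a convex spherical cap with $H>0$ is tangent from the $\Omega$-side to a horizontal plane at its apex without coinciding with it. To get the inequality the right way round the barrier must approach from \emph{inside} $\Omega$ (then the barrier lies above $S$ in the $N$-frame and $0\le H$ is the hypothesis that holds), but that requires knowing the barrier can be started inside $\Omega$ (e.g.\ that $\Omega$ contains a neighbourhood of the vertex, which is not assumed here) and checking that a first contact on $\partial S$ is tangential, which only happens when the two contact angles agree. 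None of this is verified in your sketch; the sentence ``the geometric configuration at the first touching is arranged so that the inequality $H_1\le H_2$ holds'' is exactly the point at issue. Your fallback --- Alexandrov reflection in planes through the axis to force rotational symmetry, then Delaunay classification --- is also not available for free: the paper's own final Proposition runs the moving-plane argument with axial planes only under the extra hypothesis that $\partial S$ lies in a half-cone, and obtains a single plane of symmetry, not rotational invariance. In short: (1) is correct and identical to the paper; for (2) and (3) either adopt the paper's reading (derive them from Theorem \ref{tmain} under $H\le 0$) or repair the barrier argument by approaching from inside $\Omega$ and treating the boundary and vertex issues explicitly.
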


\begin{proof}
For each $\gamma$, there exists an  umbilical surface $U_{\gamma}$ in $C_D$ spanning $\Gamma$ and meeting $C_\Gamma$ with angle $\gamma$. Consider on   $U_\gamma$ the unit normal vector pointing to  the  domain $\Omega_\gamma$ bounded by $U_\gamma$ and $C_\Gamma$. Because $U_\gamma\subset C_D$, then $0\leq\gamma\leq\pi$. In fact, the surface $U_\gamma$ is a planar disc if $\gamma=(\pi+\varphi)/2$ and
a spherical cap if $\gamma\not=(\pi+\varphi)/2$. See Figure \ref{fig2}. Moreover,
 \begin{enumerate}
\item[(i)] If $\gamma<(\pi+\varphi)/2$, $U_\gamma$ lies over
the plane $\Pi$ containing $\Gamma$ and the mean curvature is positive. If $\gamma<\pi/2-\varphi$, then $U_\gamma$ is a large spherical cap and if $\gamma>\pi/2-\varphi$, $U_\gamma$ is a small spherical cap. In the case $\gamma=\pi/2-\varphi$, $U_\gamma$ is a hemisphere.
\item[(ii)] If $\gamma>(\pi+\varphi)/2$, $U_\gamma$ lies below
the plane $\Pi$ containing $\Gamma$ and the mean curvature is negative.
\end{enumerate}
{Assume that $H\leq 0$. Then Theorem \ref{tmain} implies that $S$ is a planar disc or a spherical cap. By the above description of the configurations of all $U_\gamma$ depending on the contact angle $\gamma$, the only possibilities are the cases (2) and (3). As a conclusion, if $\gamma<(\pi+\varphi)/2$, then $H$ must be positive.}
\end{proof}

Finally, we obtain a result of symmetry using the Alexandrov reflection method by planes.

\begin{proposition} Consider a circular cone $C_\Gamma$ whose axis $L$ is the  vertical $z$-axis of $\r^3$ and let $S$ be a capillary surface in $C_D$. Assume that the boundary $\partial S$ lies in a half-cone, that is, $\partial S\subset \{p\in C_\Gamma: \langle p,a\rangle>0\}$ for some unit horizontal vector $a\in\r^3$. Then $S$ is symmetric with respect to a vertical plane.
\end{proposition}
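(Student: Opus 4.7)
The plan is to apply the Alexandrov moving plane method with the one-parameter family of vertical planes $\Pi_t=\{p\in\r^3:\langle p,a\rangle=t\}$ orthogonal to $a$. Each $\Pi_t$ is vertical because $a$ is horizontal; write $\rho_t$ for the Euclidean reflection across $\Pi_t$. Since $\partial S$ is compact and contained in $\{\langle p,a\rangle>0\}$, there is $t_0>0$ with $\partial S\subset\{\langle p,a\rangle\geq t_0\}$.

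First I would start at $t$ larger than $t_{\max}:=\max_{p\in\overline{\Omega}}\langle p,a\rangle$, where $\Pi_t\cap\overline{\Omega}=\emptyset$, and decrease $t$. Setting $S_t^{\pm}=S\cap\{\pm(\langle p,a\rangle-t)>0\}$, for $t$ just below $t_{\max}$ the reflected piece $\rho_t(S_t^+)$ sits strictly inside $\overline{\Omega}$ and meets $S_t^-$ only on $\Pi_t$. Define
$$t^{*}=\inf\{\,t:\rho_s(S_s^+)\subset\overline{\Omega}\ \text{ and }\ \rho_s(S_s^+)\cap S\subset\Pi_s\ \text{ for all }s\in(t,t_{\max}]\,\}.$$
At $t=t^{*}$ a first touching configuration occurs between $\rho_{t^{*}}(S_{t^{*}}^+)$ and $S_{t^{*}}^-$, and the four possibilities for the touching point $p_0$ are exactly the cases [T1]--[T4] treated in the proof of Theorem \ref{tmain}. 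Because the Euclidean reflection $\rho_{t^{*}}$ preserves both the constant mean curvature $H$ of $S$ and the contact angle $\gamma$ along its boundary, $\rho_{t^{*}}(S_{t^{*}}^+)$ is a comparison surface with the same $H$, and at a boundary contact its boundary curve is tangent to $\partial S$. The appropriate version of the tangency principle (Proposition \ref{pmax})---interior, boundary, or Serrin corner---then forces $\rho_{t^{*}}(S_{t^{*}}^+)\equiv S_{t^{*}}^-$ in a neighborhood of $p_0$, and a standard connectedness argument extends this to $\rho_{t^{*}}(S)=S$. Thus $S$ is symmetric with respect to the vertical plane $\Pi_{t^{*}}$.

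The most delicate step is verifying that, as $t$ decreases from $t_{\max}$, the reflected piece $\rho_t(S_t^+)$ stays inside $\overline{\Omega}$, so that the first contact is forced to occur against $S$ itself rather than via a spurious meeting of $\rho_t(\partial S)$ with the cone wall $T\subset C_\Gamma$. This is exactly where the half-cone hypothesis enters: for $t>0$ the reflected boundary $\rho_t(\partial S)$ lies in $\{\langle p,a\rangle\leq t-t_0\}$ on the reflected cone $\rho_t(C_\Gamma)$, and since $C_\Gamma$ is circular about $L$ while $\rho_t(C_\Gamma)$ is a parallel-axis circular cone with vertex at $2ta$, one checks using the geometry of two circular cones with parallel vertical axes that $\rho_t(\partial S)$ remains in the interior of $C_D$ until the process reaches a genuine first tangency with $S_t^-$. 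Once this exclusion of spurious contact is in place, the rest of the argument is the standard moving-plane-plus-tangency-principle machinery recalled above.
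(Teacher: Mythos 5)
Your moving-plane family is genuinely different from the paper's, and it contains a gap that the paper's choice of planes is specifically designed to avoid. The paper only uses the translating planes $P(t)=\{x=t\}$ as a preliminary sweep from $t=-\infty$ up to $t=0$ (where the reflected piece contains no point of $\partial S$), and then switches to the one-parameter family of vertical planes $Q(t)$ \emph{containing the axis} $L$; the decisive tangency argument is carried out only with the planes $Q(t)$, precisely because reflection about a plane through $L$ is a symmetry of the circular cone $C_\Gamma$. Your planes $\Pi_t$ with $t\neq 0$ do not preserve $C_\Gamma$, and this breaks the boundary analysis in two ways. First, as you yourself compute, for $t>0$ the reflected curve $\rho_t(\partial S\cap\{\langle p,a\rangle>t\})$ is pushed \emph{strictly inside} the open solid cone, i.e.\ off the supporting wall. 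Hence the first contact can occur between a boundary point of $\rho_{t^*}(S_{t^*}^+)$ and an \emph{interior} point of $S$. This configuration is not [T1]--[T4] and is not covered by any version of Proposition \ref{pmax}: a surface-with-boundary lying in $\overline{\Omega}$ whose boundary merely touches $int(S)$ need not be tangent to $S$ there, and no contradiction follows. (In the paper's spherical and rotational reflections the wall is preserved, so a reflected boundary point landing on $S$ is forced to land on $S\cap C_\Gamma=\partial S$, which is exactly why the list [T1]--[T4] is exhaustive there.) Second, even at a genuine boundary-to-boundary contact your tangency claim fails: the reflected surface meets the \emph{reflected} cone $\rho_{t^*}(C_\Gamma)$ at angle $\gamma$, not the original cone, so the constancy of the contact angle does not yield equality of the tangent planes of $\rho_{t^*}(S_{t^*}^+)$ and $S$ at $p_0$. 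Your ``delicate step'' paragraph addresses only the (non-)issue of the reflected boundary exiting $C_D$, not these two obstructions.

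There is also a termination problem. Since for $t<0$ the reflection sends $\partial S\subset\{\langle p,a\rangle\geq t_0>0\}$ strictly outside the solid cone, your infimum satisfies $t^*\geq 0$ automatically; but nothing prevents the process from reaching $t^*=0$ with the only ``contact'' being $\rho_0(\partial S)$ landing back on the wall inside the wetted region $T$, which involves no point of $S$ and yields no conclusion. The paper's proof handles exactly this situation by switching at $t=0$ to the rotating family $Q(t)$, $t\in[0,\pi]$: these planes sweep the half-cone containing $\partial S$, so a genuine first contact with $S$ (interior, boundary, or corner type, all legitimate because $Q(t)$ preserves $C_\Gamma$) is guaranteed before $t=\pi$. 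To repair your argument you would need to replace the translating family by this rotating family for the final step; as written, the proof does not go through.
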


\begin{proof} Without loss of generality, we assume that $a=(1,0,0)$. As usual, let $\Omega\subset C_D$ be the bounded domain by $S$ and $C_\Gamma$ and let choose on  $S$  the Gauss map $N$ that points towards $\Omega$. Consider  the planes $P(t)$ of equation $x=t$ for $t\in (-\infty,0]$. We view the $x$-axis as an oriented line where the righ direction points towards $x\rightarrow+\infty$.

We start with the Alexandrov method with reflections about the planes $P(t)$ from $t=-\infty$. We move $P(t)$ to the right by $t\nearrow 0$   until the first contact with $S$. {A possible case is that the contact occurs at $t=t_0<0$. We} move $P(t)$ slightly to the right  and we reflect the left side of $S$ about $P(t)$. For values $t\in (t_0,t_0+\epsilon)$, the reflected surface lies in $\Omega$. We move $P(t)$ on the right and reflecting the part of $S$ on the left with respect to $P(t)$. If there is a contact point at $t=t_1$ with $t_1\leq 0$ and $t_0<t_1$,   the touching point must be interior by the geometry of the circular cone $C_\Gamma$ and because the boundary $\partial S$ lies in the half-space $\r^3_{a}=\{(x,y,z)\in\r^3: x>0\}$. Then   the tangency principle implies that $S$ has a symmetry plane $P(\bar{t})$ for some $\bar{t}\leq 0$, which it is impossible since $\partial S$ lies in the half-space $\r^3_{a}$. Moreover, by the geometry of $C_\Gamma$ there is not contact point with some boundary point. This means that we arrive until $t=0$ and the reflected surface lies in $\Omega$.  We call this last possibility case (i).

Case (ii).  The other possibility is  that when we move $P(t)$ from $t=-\infty$, there is no contact point between $P(t)$ and $S$ for all $t\leq 0$.

 In both cases (i) and (ii), we arrive with the planes $P(t)$ until the position  $t=0$ and we stop here.

Now we change the planes $P(t)$ in the reflection method by the one parameter family of planes   $Q(t)$ that contain the axis $L$ and parametrized by the angle $t\in [0,\pi]$ that makes $Q(t)$ with the vector $(1,0,0)$. Exactly,
 $$Q(t)=\{(x,y,z)\in\r^3: \cos(t) x+\sin(t)y=0\}.$$
Introduce the next notation:
$$S_Q(t)^{-}=\{(x,y,z)\in S:  \cos(t) x+\sin(t)y<0\},$$
$$S_Q(t)^{+}=\{(x,y,z)\in S:  \cos(t) x+\sin(t)y>0\}$$
and $\tilde{S}_Q(t)$ the reflection of $S_Q(t)^{-}$ about $Q(t)$.  For $t=0,\pi$, $Q(0)=P(0)=Q(\pi)$. By the process followed until here, we have two possibilities according to the above discussion of cases (i) and (ii):
 \begin{enumerate}
 \item\emph{Case (i)}. We know that $P(t)$ touches the first time $S$ at $t=t_0<0$ and by the argument of moving to the right and reflecting, the reflected surface about the plane $P(0)$ lies in the domain $\Omega$. At this position, indeed, when we stop at $P(0)$, we change by the reflection process about $Q(t)$, that is,  we begin reflecting $S_Q(t)^{-}$ about $Q(t)$. Remark that the reflection about $Q(t)$ leaves invariant $C_\Gamma$ because $Q(t)$ contains the axis $L$ of $C_\Gamma$. For values $t$ in some interval  $[0,\epsilon)$ around $t=0$, the reflected surface $\tilde{S}_Q(t)$ lies in the domain $\Omega$. Letting $t\nearrow  \pi$, we arrive the first time $t=t_1<\pi$ such that $\tilde{S}_Q(t_1)$ has a first contact point $p_0$ with $S_Q(t_1)^{+}$. This occurs because $\partial S\subset \r^3_{a}$ and the planes  $Q(t)$ sweep   the half-cone $C_\Gamma\cap \r^3_{a}$.  If $p_0$ is an interior point, then the  interior tangency principle implies that $Q(t_1)$ is a plane of symmetry of $S$, proving the result. If $p_0\in\partial S$, then the tangent planes of $S$ and the reflected surface agree at $p_0$ by the constancy of the contact angle and because reflections do not change this angle. The tangency principle   of the boundary point version or corner point version concludes that $Q(t_1)$ is a plane of symmetry of $S$ again.
 \item \emph{Case (ii)}. By moving $P(t)$ from $t=-\infty$,  we arrive  until $t=0$ and we have no contact with $S$. Then we do a similar argument of reflection with the planes $Q(t)$ as above. The difference now is that firstly we arrive until a first position that the planes $Q(t)$ touches $S$ at some time $t=t_o\in (0,\pi)$. Next, we follow increasing $t\nearrow \pi$ and reflecting as in the above case, until that we find a time $t=t_1\in (0,\pi)$, $t_1>t_0$, such that $\tilde{S}_Q(t_1)$  touches the first time with   $S_Q(t_1)^{+}$. The same argument   for the case (i) proves that $Q(t_1)$ is a plane of symmetry of $S$.
     \end{enumerate}
\end{proof}

\textbf{Acknowledgement.} The first author  is partially supported by MEC-FEDER
 grant no. MTM2011-22547 and
Junta de Andaluc\'{\i}a grant no. P09-FQM-5088.  The second author was supported in part by the National Research Foundation of Korea (2012-0007728) and the Korea Institute for Advanced Study (KIAS) as an Associate Member.

\end{document}